\newcommand{\E}{{\mathbf E}}
\renewcommand{\P}{{\mathbf P}}
\newcommand{\sig}{\sigma}
\newcommand{\be}{\begin{equation}}
\newcommand{\ee}{\end{equation}}
\newcommand{\bea}{\begin{eqnarray}}
\newcommand{\eea}{\end{eqnarray}}
\newlength{\originalbase}
\newcommand{\spacing}[1]{\setlength{\baselineskip}{#1\originalbase}}
\newcommand{\ep}{\varepsilon}
\renewcommand{\t}{T_G}
\newcommand{\B}{{\mathcal B}}
\newcommand{\Z}{\mathbb{Z}}
\newcommand{\R}{\mathbb{R}}
\def\eps{\epsilon}
\begin{document}
\spacing{1.5}
\newtheorem{theorem}{Theorem}[section]
\newtheorem{claim}{Claim}[theorem]
\newtheorem{prop}[theorem]{Proposition}
\newtheorem{remark}[theorem]{Remark}
\newtheorem{lemma}[theorem]{Lemma}
\newtheorem{corollary}[theorem]{Corollary}
\newtheorem{guess}[theorem]{Conjecture}
\newtheorem{conjecture}[theorem]{Conjecture}

\title{Can extra updates delay mixing?}

\author{
Yuval Peres~\thanks{Microsoft Research, Redmond WA 98052}
\and Peter Winkler~\thanks{Dept.\ of Mathematics, Dartmouth College, Hanover, NH 03755.}
}

\maketitle

\begin{abstract}
We consider Glauber dynamics (starting from an extremal configuration)
in a monotone spin system, and show that interjecting extra updates
cannot increase the expected Hamming distance or the total variation distance to the stationary
distribution.  We deduce that for monotone Markov random fields,
when block dynamics contracts a Hamming metric,
  single-site dynamics mixes in O$(n \log n)$ steps on an $n$-vertex graph.
In particular, our result completes work of Kenyon, Mossel and Peres concerning
Glauber dynamics for the Ising model on trees.  Our approach also shows that on bipartite graphs,
alternating updates systematically between odd and even vertices cannot improve the mixing time
by more than a factor of $\log n$ compared to updates at uniform random locations on an $n$-vertex graph.
Our result is especially effective in comparing block and single-site dynamics; it has already been
used in works of Martinelli, Sinclair, Mossel, Sly, Ding, Lubetzky, and Peres in various combinations.
\end{abstract}

\medskip

\section{Introduction}

In a number of cases, mixing rates have been determined for Glauber
dynamics using block updates, but only rough estimates have been
obtained for single site dynamics.  Examples include the Ising model
on trees and the monomer-dimer model on $\Z^d$.  In this work, we
employ a ``censoring lemma'' for monotone systems to transport
bounds for block dynamics to bounds for single site dynamics;
sharp estimates result in several situations.

Our main interest is in spin systems
with nearest-neighbor interactions on a finite graph $G$.
A configuration consists of a mapping $\sigma$ from the set $V$ of sites
of $G$ to a fixed partially ordered set $S$ of ``spins''.  The probability
$\pi(\sigma)$ of a configuration $\sigma$ is given by
$$
\frac{1}{Z} \prod_{u \sim v} \Psi(\sigma_u,\sigma_v)
$$
where $Z$ is the appropriate normalizing constant.
More generally, our results apply when $\pi$ defines a
 monotone Markov random field.
In single site Glauber dynamics, at each step, a uniformly random
site is ``updated'' and assumes a new spin according to $\pi$ conditioned on
the spins of its neighbors.  The resulting Markov chain is irreducible, aperiodic, and
has unique stationary distribution $\pi$.
Let $p^t(\omega, \cdot)$ be
the distribution of configurations after $t$ steps, with initial state
$\omega$.  Let
$\|\mu-\nu\|=\frac12\sum_{\sigma}|\mu(\sigma)-\nu(\sigma)|$ be the
total variation distance.
The {\bf mixing time} $\t(\eps)$ for the dynamics is the
least $t$ such that $\|p^t(\omega, \cdot)-\pi \| \le \eps$ for any
 $\omega \in \Omega$.
In discrete-time block dynamics, a family $\B$ of ``blocks'' of sites is
provided.
At each step, a block $B \in \B$ is selected uniformly at random
and a configuration on $B$ is selected according to $\pi$ conditioned on
the spins of the sites in the exterior boundary of $B$.
A useful method of bounding mixing times is to first bound the spectral gap of the block dynamics
using path coupling, and then use comparison theorems for the spectral gap to derive a bound for $\t(\eps)$.
In key examples of Glauber dynamics for the Ising model on lattices and trees,
this method tends to overestimate  $\t(\eps)$ by a factor of $n$ on an $n$-vertex graph.

Stated informally,
our main results are:
\begin{itemize}
\item In Glauber dynamics for a monotone (i.e., attractive) spin system,
started at the top or bottom state, censoring updates increases the distance from stationarity.
\item Suppose a monotone spin system on an $n$-vertex graph $G$
has a block dynamics which contracts
(on average) a Hamming metric, and single-site dynamics on each block
with arbitrary boundary conditions mixes in a bounded time.
If the collection of blocks can be partitioned into a bounded number of layers such that
blocks in each layer are nonadjacent, and weights within a block have a bounded ratio,
then discrete time single site dynamics on $G$ mixes (in total variation) in $O(n \log n)$ steps.
\item In \cite{KMP} (see also \cite{BKMP}) it was proved that
for the Ising model on an $n$-vertex $b$-ary tree, block dynamics
with large bounded blocks contracts a (weighted) Hamming metric at temperatures above
the extremality threshold. This, in conjunction with our main results, implies
that single-site dynamics on these trees mixes in $O(n \log n)$ steps.
(See \cite{MSW} for refinements of this theorem using Log-Sobolev inequalities).
\item
If $H$ is a subgraph of $G$ and only one vertex in $H$ is adjacent to
vertices in $G \setminus H$, then continuous-time Glauber dynamics on $H$
mixes faster than the restriction to $H$ of continuous-time Glauber
dynamics on $G$.
\item For an $n$ vertex bipartite graph, alternating updating
of all the ``odd'' and all the ``even'' vertices cannot mix much faster
than systematic updates (enumerating the vertices in an arbitrary order):
The odd-even updates can reduce the number of
vertices updated at most by a factor of two. Similarly,
the odd-even updates can be faster than uniformly random updates by a factor of at most $\log n$.
\end{itemize}
See \S 1.2 for further discussion of block dynamics, and \S 2-3 for proofs. A preliminary version of our results, including the proof of Theorem \ref{subseq}, was presented in the 2005 lectures \cite{P}.

\subsection{Terminology}
In what follows, a {\em system} $\langle \Omega,S,V,\pi \rangle$ consists
of a finite set $S$ of spins, a set $V$ of sites,
a space $\Omega \subseteq S^V$ of configurations (assignments of spins
to sites), and a distribution $\pi$ on $\Omega$, which will serve as the
stationary distribution for our Glauber dynamics.
We assume that $\pi(\omega)>0$ for $\omega \in \Omega$.
The Ising model
(where $S=\{+,-\}$ and $\Omega = S^V$) is the basic example; we allow
$\Omega$ to be a strict subset of $S^V$ to account for ``hard constraints''
such as those imposed by the hard-core gas model.

We denote by $\sigma_v^s$ the configuration
obtained from $\sigma$ by changing its value at $v$ to $s$, that is,
$\sigma_v^s(v)=s$ and $\sigma_v^s(u)=\sigma(u)$ for all $u \not= v$.
Let $\sigma_v^\bullet$ be the set of configurations $\{\sigma_v^s\}_{s \in S}$
in $\Omega$.

The {\bf update} $\mu_v$ at $v$ of a distribution
$\mu$ on $\Omega$ is defined by
\begin{equation} \label{update}
\mu_v(\sigma) =
\frac{\pi(\sigma)}{\pi(\sigma_v^\bullet)} \mu(\sigma_v^\bullet)
\quad \mbox{ \rm for } \; \sigma \in \Omega.
\end{equation}

For measures $\mu$ and $\nu$ on a poset $\Gamma$, we write
$\nu \preceq \mu$ to
indicate that $\mu$ {\em stochastically dominates\/} $\nu$, that is,
$\int g\,d\nu \le \int g\,d\mu$ for all increasing
functions $g: \Gamma \to \R$.

The system $\langle \Omega,S,V,\pi \rangle$ is called {\bf monotone}
if $S$ is totally ordered, $S^V$ is endowed
with the coordinate-wise partial order, and
whenever $\sigma, \, \tau \in \Omega$ satisfy $\sigma \le \tau$,
then for any vertex $v \in V$ we have
\begin{equation} \label{mono}
\Big\{\frac{\pi(\sigma_v^s)}{\pi(\sigma_v^\bullet)}\Big\}_{s \in S}
\preceq \, \, \Big\{\frac{\pi(\tau_v^s)}{\pi(\tau_v^\bullet)}\Big\}_{s \in S}
\end{equation}
as distributions on the spin set $S$.

\subsection{Main results}
\begin{theorem} \label{subseq}
Let $\langle \Omega,S,V,\pi \rangle$ be a monotone system and
let $\mu$
be the distribution on $\Omega$ which results from successive updates
at sites $v_1,\dots,v_m$, beginning at the top configuration.
Let $\nu$ be defined similarly but with updates
only at a subsequence $v_{i_1},\ldots,v_{i_k}$.
Then
 $\mu \preceq \nu$, and
$\|\mu - \pi\| \leq \|\nu - \pi\|$ in total variation. Moreover, this also holds if the sequence
$v_1,\dots,v_m$ and its subsequence $i_1,\ldots,i_k$ are chosen at random according to any prescribed distribution.
\end{theorem}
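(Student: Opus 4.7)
The plan is to argue by induction on the number of ``interjected'' updates present in $v_1,\dots,v_m$ but not in the subsequence, reducing to the case of a single extra update. To make the induction close, I would track a stronger invariant than $\rho \succeq \pi$: call a distribution $\rho$ on $\Omega$ \emph{increasing} if the ratio $\sigma \mapsto \rho(\sigma)/\pi(\sigma)$ is monotone nondecreasing on $\Omega$. The initial point mass at the top configuration is increasing (its density vanishes off the unique maximum), while $\pi$ itself has constant ratio $1$.

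Three lemmas drive the argument. (i) If $\rho$ is increasing, so is $\rho_v$: from \eqref{update}, $\rho_v(\sigma)/\pi(\sigma) = \rho(\sigma_v^\bullet)/\pi(\sigma_v^\bullet)$ equals the $\pi$-conditional expectation of $\rho/\pi$ given $\sigma\restr(V\setminus\{v\})$, and by \eqref{mono} the conditional expectation of an increasing function against the monotone family of conditionals $\pi(\cdot \mid \cdot\restr(V\setminus\{v\}))$ is itself an increasing function of $\sigma$. (ii) If $\rho$ is increasing, then $\rho_v \preceq \rho$: setting $f = \rho/\pi$, for any increasing test function $g$ one computes
\[
\E_\rho g - \E_{\rho_v} g \;=\; \E_\pi[fg] - \E_\pi\bigl[\E_\pi[f\mid \sigma\restr(V\setminus\{v\})]\cdot g\bigr] \;=\; \E_\pi\bigl[\mathrm{Cov}_\pi\bigl(f,g\,\big|\,\sigma\restr(V\setminus\{v\})\bigr)\bigr] \;\ge\; 0,
\]
since conditionally on $\sigma\restr(V\setminus\{v\})$ both $f$ and $g$ are monotone functions of the single coordinate $\sigma(v)$ of the totally ordered set $S$, hence positively correlated. (iii) Updates preserve stochastic domination: if $\alpha \preceq \beta$, then $\alpha_w \preceq \beta_w$. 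This follows from a monotone coupling $(X,Y) \sim \alpha \times \beta$ with $X \le Y$ (Strassen), followed by joint resampling of the $w$-coordinates via the quantile of $\pi(\cdot \mid \cdot\restr(V\setminus\{w\}))$ driven by a common uniform; by \eqref{mono}, $X \le Y$ is preserved after resampling.

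Given these lemmas, the deterministic-schedule statement reduces to the case of one extra update at position~$j$. Before step~$j$ both schedules agree and produce a common distribution $\rho$, which is increasing by iterating (i) from the top point mass. Then (ii) gives $\rho_{v_j} \preceq \rho$, and iterating (iii) through the common suffix preserves the domination, yielding $\mu \preceq \nu$. For total variation, increasingness of $\mu/\pi$ forces $U := \{\sigma : \mu(\sigma) > \pi(\sigma)\}$ to be an up-set, so $\|\mu-\pi\| = \mu(U) - \pi(U)$; combining with $\mu(U) \le \nu(U)$ (from $\mu \preceq \nu$ applied to the indicator of $U$) and $\nu(U) - \pi(U) \le \|\nu - \pi\|$ gives $\|\mu - \pi\| \le \|\nu - \pi\|$. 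The random-schedule extension follows by conditioning on the schedule and averaging: the increasing-density property passes to mixtures, and stochastic domination survives averaging against any fixed increasing $g$.

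The main conceptual obstacle is identifying the right invariant: $\rho \succeq \pi$ alone does \emph{not} suffice, since on a two-vertex graph with uniform $\pi$ one can exhibit $\rho \succeq \pi$ for which $\rho_v \not\preceq \rho$. The density-monotonicity condition is what is preserved by updates, and it reduces the contraction step (ii) to the positive correlation of monotone functions on a single totally-ordered fiber, a direct consequence of \eqref{mono}.
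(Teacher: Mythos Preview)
Your proposal is correct and follows essentially the same route as the paper: the increasing-density invariant, your three lemmas, the reduction to a single omitted update, and the up-set argument for total variation match the paper's Lemmas~\ref{Lemma1}, \ref{Lemma3}--\ref{Lemma-var} and Theorem~\ref{onestep} almost step for step. Your phrasing of (ii) via conditional covariance and of (iii) via Strassen plus quantile coupling are cosmetic reformulations of the paper's arguments rather than genuinely different ideas.
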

See \S\ref{pf-censor} for the proof, which shows also that the assumption of starting from the top configuration
can be replaced by the assumption that the dynamics starts at a distribution $\mu_0$ where the likelihood ratio $\mu_0/\pi$ is weakly increasing.
Other assumptions, in particular monotonicity of the system, cannot be dispensed with, as shown recently by Holroyd \cite{Ho}.

\medskip

Next, we discuss block dynamics and the contraction method to bound mixing times for spin systems.

Let us endow $\Omega \subset S^V$ with the Hamming metric $H(\sigma, \tau)= |\{v \in V: \sigma_v \not= \tau_v\}$.
(More generally, it is sometimes fruitful to consider a weighted $\ell^1$ metric).
The {\em Kantorovich distance} $\rho(\mu,\nu)$ between two distributions on $\Omega$
is defined to be the minimum over all couplings of $\mu$ and $\nu$ of $\E H(\sigma,\tau)$,
where $\sigma$ is drawn from $\mu$ and $\tau$ from $\nu$. The fact that this metric satisfies the triangle inequality is proved, e.g., in Chapter 14 of \cite{LPW} and is essentially equivalent to the path-coupling Theorem of \cite{BD}.

Given a subset $B$ of $V$, let $\sigma_B^\bullet$ be the set of configurations
$\tau \in \Omega$ such that $\tau$ agrees with $\sigma$ on $V \setminus B$.
For $\sigma \in \Omega$, the {\em block update} $U_B\sigma$ is a measure on $\sigma_B^\bullet$
defined by $(U_B\sigma)(\omega) = \frac{\pi(\omega)}{\pi(\sigma_B^\bullet)}$ for $\omega \in \sigma_B^\bullet$.
Thus $U_B\sigma$ is $\pi$ conditioned on  $\sigma_B^\bullet$.
For a collection of blocks $\B$, the $\B$-{\em averaged block
update} of $\sigma \in \Omega$ yields a random configuration with distribution $\frac{1}{|\B|} \sum_{B \in \B} U_B\sig \,.$
The {\em block dynamics\/} determined by $\B$ consists of performing successive $\B$-averaged block
updates.

We say that a block dynamics is {\em contracting\/} if for any two configurations $\sigma$ and $\tau$, the expected number
of discrepancies after a block update is smaller by a factor of $1{-}\gamma|B|/|V|$ or less,
where $\gamma$ is a constant and $|B|$ is the number of sites in a block.  The triangle inequality for the Kantorovich metric implies that it suffices to verify this contraction condition when $\sigma$ and $\tau$ differ at a single site.  In our setting, contraction implies a bound of order $|V|\log|V|$ on the mixing time, since the number of blocks is of order $|V|$.   When the blocks are cubes in a lattice, a sufficient condition for contraction of block dynamics is {\em strong spatial mixing}, as defined and studied in \cite{MO, Ma, Ma3, DSVW}.

The system $\langle \Omega,S,V,\pi \rangle$ is a {\bf Markov random field} if for any set $B \subset V$
and $\sigma \in \Omega$, the distribution $U_B\sigma$
depends only on the restriction of $\sigma$ to
$\partial B$, the set of vertices in $B^c$ that are adjacent to $B$.

The next theorem is intended to illustrate how, in a particular case,
Theorem~\ref{subseq} can be used to deduce rapid mixing for single-site
dynamics from a contraction condition for block dynamics.

\begin{theorem}
Let $\Omega$ be the configuration space for a monotone Markov random field on the $d$-dimensional toroidal grid $V = [0,N{-}1]^d$.
Let $(\ell{+}1)|N$ and for each $v \in V$, let $B_v$ be the cube of side-length $\ell$ anchored
at $v$.  If the corresponding block dynamics is contracting, and the single-site dynamics
restricted to any block has uniformly bounded mixing time (for all boundary conditions), then single-site dynamics on all
of $V$ has mixing time ${\rm O}(|V|\log|V|)$, where the implied constant depends only on the contraction parameter $\gamma$ and on $\ell$.
\end{theorem}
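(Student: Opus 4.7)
The strategy is to apply Theorem~\ref{subseq} with a carefully censored subsequence of uniform single-site updates that simulates the block dynamics, whose fast mixing is given by the contraction hypothesis. Because $(\ell{+}1)|N$, the $|V|$ anchors split into $K=(\ell{+}1)^d$ cosets of $(\ell{+}1)\Z^d$; let $L_j$ be the collection of $B_v$ with anchor in the $j$-th coset. Blocks in $L_j$ are pairwise disjoint and at graph distance $\ge 1$, so by the Markov random field property the updates $\{U_B : B\in L_j\}$ commute pairwise and their composition $U_{L_j}$ is a well-defined ``layer update''; a super-step $U_{L_1}\cdots U_{L_K}$ touches every block in $\B$ exactly once.

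The contraction hypothesis $\rho(U_\B\mu,U_\B\nu)\le(1{-}\gamma|B|/|V|)\,\rho(\mu,\nu)$ for the Kantorovich distance $\rho$ based on Hamming metric, together with the triangle inequality, implies that one super-step contracts $\rho(\cdot,\pi)$ by a factor $(1{-}c_0)$ with $c_0=c_0(\gamma,\ell)>0$. Hence $R={\rm O}(\log|V|)$ super-steps bring the block chain to within constant total variation distance of $\pi$.

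To simulate the block dynamics via uniform single-site updates, fix $T=c|V|\log|V|$, let $v_1,\ldots,v_T$ be i.i.d.\ uniform vertices, and start at the top configuration. Partition $\{1,\ldots,T\}$ into $R$ super-step intervals, each further into $K$ phases of length $P=T/(RK)$. In phase $(r,j)$, retain the indices $t$ with $v_t\in\bigcup_{B\in L_j}B$ and censor the rest. Since the $L_j$-blocks are pairwise non-adjacent, $\partial B\cap\bigcup_{B'\in L_j}B'=\emptyset$, so the coordinates on $\partial B$ remain frozen during the phase; by the MRF property the retained updates decompose into independent single-site Glauber chains on the disjoint blocks $B\in L_j$, driven by binomial visit counts. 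The bounded-mixing hypothesis then ensures each such chain approximates $U_B$ in total variation after a sufficient number of visits; combining a Chernoff bound on the visit counts with a monotone sandwich coupling between the simulated dynamics (from top and bottom) and the ideal block dynamics yields that the subsequence distribution is within $\epsilon$ of $\pi$ in total variation. Theorem~\ref{subseq} transfers this bound to the full uniform-update sequence, and a final monotone coupling extends it to arbitrary initial configurations, giving $\t(\epsilon)={\rm O}(|V|\log|V|)$.

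The delicate balance is between $R$, the per-phase length $P$, and the per-block simulation error $\delta$. A naive analysis---summing total variation errors additively over the $R\cdot|V|$ simulated block updates---demands $\delta={\rm O}(1/(R|V|))$, which requires $\Theta(\log|V|)$ visits per block per phase and yields $T=\Theta(|V|\log^2|V|)$. The way to avoid this extra logarithm is to exploit the \emph{stochastic-dominance} form of Theorem~\ref{subseq} within each phase: the simulated dynamics from the top stochastically dominates the ideal block dynamics, and this gap contracts geometrically under the super-step's $(1{-}c_0)$-contraction, so a constant per-block simulation error is absorbed without additive blow-up. This keeps $P={\rm O}(|V|)$ and hence $T={\rm O}(|V|\log|V|)$.
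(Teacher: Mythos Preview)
Your overall strategy---partition the blocks into $K=(\ell{+}1)^d$ layers of pairwise non-adjacent cubes, then censor uniform single-site updates so as to simulate layer-wise block dynamics, and finally invoke Theorem~\ref{subseq}---is exactly the paper's. Two steps, however, are not justified as written.

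First, the assertion that the deterministic super-step $U_{L_K}\cdots U_{L_1}$ contracts the Kantorovich distance by a constant factor does not follow from the contraction hypothesis ``together with the triangle inequality.'' The hypothesis concerns the \emph{averaged} update $U_\B=|V|^{-1}\sum_B U_B$; an individual layer update $U_{L_j}$ can \emph{expand} the Hamming distance (a discrepancy at $u\in\partial B$ propagates into $B$), so the composition over all layers in a fixed order need not contract. The paper sidesteps this by selecting \emph{one random layer} per round: averaging over the $K$ choices of $\vec{j}$ turns the per-block inequality $\sum_{\partial B\ni u}\Phi_u(B)\le(1-\gamma)\ell^d$ directly into a constant contraction $\gamma(\ell/(\ell{+}1))^d$ for a single global (random-layer) update, so $O(\log|V|)$ such rounds suffice.

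Second, the stochastic-dominance mechanism you invoke to absorb the per-block simulation error does not close the argument: knowing that the simulated chain from the top dominates the ideal block chain, and that the latter approaches $\pi$, gives no upper bound on the simulated chain (it could in principle stay at the top). What is actually needed---and what the paper computes---is that the \emph{approximate} block update $U_B^*$ (a binomial number of single-site updates inside $B$) is itself contracting in Kantorovich distance. This comes from the triangle inequality
\[
\rho(U_B^*\sigma,U_B^*\sigma')\le \rho(U_B^*\sigma,U_B\sigma)+\rho(U_B\sigma,U_B\sigma')+\rho(U_B\sigma',U_B^*\sigma'),
\]
with each outer term bounded by a constant $\delta$ once the (uniformly bounded) block mixing time is exceeded; choosing $\delta$ proportional to $\gamma\ell^d/|\partial B|$ preserves a contraction factor $\gamma/4$ for the approximate random-layer update. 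This is a per-site Kantorovich estimate, not an additive total-variation sum over $R\cdot|V|$ blocks, so the extra logarithm you worried about never arises and no dominance trick is required.
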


\begin{proof}
For any $u \in V$ and any block $B$, let
$$
\Phi_u(B) = \max_{\sigma \in \Omega, s \in S} \rho(U_B\sigma, U_B\sigma')
$$
where $U_B\sigma$ is the distribution that results when $B$ is updated from configuration $\sigma$,
and $\sigma'=\sigma_u^s$ is obtained from $\sigma$ by changing the spin at $u$ to $s$.  Since
$H(\sigma,\sigma')=1$, we have $\Phi_u(B)=1$ when neither $B$ nor $\partial B$ contains $u$.
If $u \in B$, then $\Phi_u(B)=0$, so the key case is when $u$ is on the exterior boundary of $B$.

Since the dynamics for updating a random block $B$ is assumed to be contracting, we have
in particular that for some constant $\gamma$, and any $u \in V$,
\begin{equation}\label{eqn:ham}
\gamma \ell^d/N < \E\Delta = \P(B \owns u) - \frac1N \sum_{\partial B \owns u} \Phi_u(B)
  = \frac{\ell^d}{N} - \frac1N \sum_{\partial B \owns u} \Phi_u(B)
\end{equation}
where $\Delta$ is the decrease in Hamming distance between $\sigma$ and $\sigma'$ caused by the update.

Let $t$ be the number of single-site updates, performed uniformly at random on the sites
{\em inside} a box $B$, needed (regardless of boundary spins) to bring the Kantorovich
distance between the resulting configuration on $B$ and the block-update configuration
down to at most $\delta$, where $\delta = \gamma \ell^d/(4|\partial B|) = \gamma/(1+((\ell{+}2)/\ell)^d)$.
We may assume $t \ge \ell^d \log(\ell^d)$ so that virtually
all of the sites in $B$ actually get updated.  Letting $U^*_B\sigma$ denote the distribution
that results when $t$ single-site updates are performed on $B$, we have that the consequent decrease
in Hamming distance satisfies

\begin{eqnarray*}
\E\Delta_t & = & \P(u~{\rm is}~{\rm updated}) - \frac1N \sum_{\partial B \owns u} \rho(U_B^*\sigma,U^*_B\sigma') \\
& \ge & \P(B \owns u) - \frac1N \sum_{\partial B \owns u} \left( \rho(U^*_B\sigma,U_B\sigma)
  + \rho(U_B\sigma,U_B\sigma') + \rho(U_B\sigma',U_B^*\sigma') \right) \\
& \ge & \frac{\ell^d}{N^d} - \frac{2|\partial B|}{N^d} \delta - \frac1{N^d} \sum_{\partial B \owns u} \Phi_u(B) \\
& \ge & \frac{\ell^d}{N^d} - \frac{\gamma \ell^d}{2N^d} - (1 - \gamma)\frac{\ell^d}{N^d} = \frac{\gamma \ell^d}{2N^d}~.
\end{eqnarray*}

Suppose next that ${\bf T}$ is a nonnegative-integer-valued random variable that satisfies
$\P({\bf T}<t) < \delta/\ell^d$.  Since the Hamming distance of any two configurations
is bounded by $\ell^d$, if we perform ${\bf T}$ random single-site updates on the block $B$, we get
\begin{eqnarray}\label{eqn:appr}
\E\Delta_{\bf T} & > & \gamma \frac{\ell^d}{2N^d} - \frac{p}{N^d} \sum_{\partial B \owns u} \ell^d \\ \nonumber
& \ge & \frac {\gamma \ell^d}{2N^d} - \frac{\delta}{N^d \ell^d} |\partial B|\ell^d \\ \nonumber
& \ge & \frac {\gamma \ell^d}{4N^d}~.
\end{eqnarray}
so this ``approximate block update'' is still contracting.

Suppose now that we choose $\vec{j} = (j_1,\dots,j_d)$ uniformly at random in $\{0,\dots,\ell\}^d$ and update
(in the normal fashion) all the blocks $B_{\vec{j}+(\ell+1)\vec{k}}$ where $\vec{k} \in \Z^d$.  These blocks are disjoint, and, moreover,
no block has an exterior neighbor belonging to another block, hence it makes no difference in
what order the updates are made.  We call this series of updates a ``global block update,''
and claim that it is contracting---meaning, in this case, that a {\em single} global update
reduces the Hamming distance between any two configurations $\sigma$ and $\tau$ by a constant factor $1{-}\gamma'$.

To see this, we reduce to the case where $\sigma$ and $\tau$ differ only at a vertex $u$ and
average over choice of $\vec{j}$ to get that the expected decrease in Hamming distance is
$$
\frac1{(\ell{+}1)^d}\left( \ell^d - \sum_{\vec{k}} \Phi_u(B_{\vec{j}+(\ell+1)\vec{k}})\right)
$$
which, by comparing with (\ref{eqn:ham}), exceeds $\gamma \big( \ell/(\ell{+}1) \big)^d$.

If the updates of the blocks $B_{\vec{j}+(\ell+1)\vec{k}}$ are of the approximate variety as
described above, we get an ``approximate global block update" which still contracts.

Let us now consider Glauber dynamics (successive updates of random single sites)
for time $2t|V|/\ell^d$, with the object of showing that this will reduce the
expected Hamming distance between any two configurations by at least a constant factor.
The number of updates that hit a particular block $B$ will then be a binomially distributed
random variable ${\bf T}$ with mean $2t$; its probability of falling below $t$ is bounded above
by $e^{-t/4}$ (see, e.g., \cite{AS}, Theorem A.1.13, p.\ 312).  Recall that we took $t \ge \ell^d \log(\ell^d)$;
if $t <4 \log(\ell^d/\delta)$ then we increase $t$ to equal the larger right-hand side, and note that it is still depends only on $\gamma, \ell$ and not on $N$.
We have thus ensured that $\P({\bf T} < t) \le \delta/\ell^d$ as required for (\ref{eqn:appr}).

It follows that if we choose $\vec{j}$ uniformly at random as above and {\em censor} all
updates of sites not in $\bigcup_{\vec{k}} B_{\vec{j}+(\ell+1)\vec{k}}$, then we have achieved an
approximate global block update, and thus a contraction of expected Hamming distance by a factor $1-\gamma/4$.

We deduce that $O(\log|V|)$ approximate global block updates suffice to reduce the maximal Kantorovich distance from its initial value $|V|$ (The Hamming distance between the top and bottom configurations) to any desired small constant. Recall that  Kantorovich distance dominates total variation distance, and each approximate global block update involves $O(|V|)$ single cite updates, with censoring of updates that fall on the (random) boundary. Thus with this censoring, uniformly random single-site updates mix in time O$(|V| \log |V|)$.

By Theorem~\ref{subseq}, censoring these updates cannot improve mixing time,
hence the mixing time for standard single-site Glauber dynamics is again O$(|V| \log |V|)$.
\end{proof}

In the above theorem the periodic boundary and divisibility condition were assumed only
for convenience in the proof, variations of which can be applied in many other settings.
Indeed, since we announced our censoring inequality in 2001,
other applications to block dynamics have been made by Martinelli and Sinclair~\cite{MS}, Martinelli and Toninelli~\cite{MT},
Mossel and Sly~\cite{MoSl}, Ding, Lubetzky and Peres~\cite{DLP}, and Ding and Peres~\cite{DP}.
In particular, \cite{DP} uses the censoring inequality to prove a uniform lower bound asymptotic to
$n \log{n}/4$ for the mixing time of Glauber dynamics of the Ising model on any $n$-vertex graph.

Note that even if the Markov random field is not monotone, our proof shows mixing time
O$(|V| \log |V|)$ for censored single-site dynamics; this improves by a log factor Corollary 3.3
of Van den Berg and Brouwer \cite{BB}.

\section{Proof of the censoring inequality (Theorem~\ref{subseq})} \label{pf-censor}

\begin{lemma}\label{Lemma1}
Let $\langle \Omega,S,V,\pi \rangle$ be a monotone system, let $\mu$ any
distribution on $\Omega$, and let $\mu_v$ be the result of updating $\mu$
at the site $v \in V$.  If $\mu/\pi$ is increasing on $\Omega$, then so is
$\mu_v/\pi$.
\end{lemma}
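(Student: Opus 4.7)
My plan is to start from the definition (\ref{update}) and rewrite the updated likelihood ratio as a weighted average on the spin set:
\begin{equation*}
\frac{\mu_v(\sigma)}{\pi(\sigma)} \;=\; \frac{\mu(\sigma_v^\bullet)}{\pi(\sigma_v^\bullet)} \;=\; \sum_{s \in S} q_\sigma(s)\, r_\sigma(s),
\end{equation*}
where $q_\sigma(s) := \pi(\sigma_v^s)/\pi(\sigma_v^\bullet)$ is a probability measure on $S$, $r_\sigma(s) := \mu(\sigma_v^s)/\pi(\sigma_v^s)$, and terms with $\sigma_v^s \notin \Omega$ contribute zero. Two features of this identity will do the real work: it presents $\mu_v(\sigma)/\pi(\sigma)$ as the expectation of $r_\sigma$ under $q_\sigma$ on $S$, making stochastic dominance in $s$ available, and the right-hand side depends on $\sigma$ only through the restriction $\sigma|_{V\setminus\{v\}}$.

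For $\sigma \le \tau$ in $\Omega$, I would then assemble three elementary ingredients. Monotonicity of the system, (\ref{mono}), is precisely $q_\sigma \preceq q_\tau$ as distributions on $S$. The hypothesis that $\mu/\pi$ is increasing on $\Omega$, applied to $\sigma_v^s \le \sigma_v^{s'}$ for $s \le s'$, shows that $s \mapsto r_\sigma(s)$ is increasing on its domain; applied to $\sigma_v^s \le \tau_v^s$ (which follows from $\sigma \le \tau$), it gives the pointwise bound $r_\sigma(s) \le r_\tau(s)$ wherever both sides are defined. Chaining:
\begin{equation*}
\sum_s q_\sigma(s)\, r_\sigma(s) \;\le\; \sum_s q_\tau(s)\, r_\sigma(s) \;\le\; \sum_s q_\tau(s)\, r_\tau(s),
\end{equation*}
where the first step applies stochastic dominance to the increasing function $r_\sigma$, and the second uses the pointwise bound with nonnegative weights $q_\tau$. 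The left-hand side equals $\mu_v(\sigma)/\pi(\sigma)$ and the right-hand side equals $\mu_v(\tau)/\pi(\tau)$, finishing the proof.

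The main obstacle I anticipate is purely bookkeeping in hard-constraint settings where $\Omega \subsetneq S^V$: some $\sigma_v^s$ need not lie in $\Omega$, so $r_\sigma(s)$ is only partially defined. This is harmless because such $s$ carry zero $q_\sigma$-weight, and one can extend $r_\sigma$ monotonically on the support of $q_\tau$ (using $\mu/\pi$-monotonicity at $\sigma_v^{s'} \le \tau_v^s$ for the largest admissible $s' \le s$) before invoking stochastic dominance; the pointwise comparison is only needed where $q_\tau$ has mass, which guarantees $\tau_v^s \in \Omega$. Modulo this verification, the argument collapses to the displayed two-step chain.
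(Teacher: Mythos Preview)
Your proposal is correct and follows essentially the same route as the paper: both rewrite $\mu_v/\pi$ as the expectation of the likelihood ratio against the single-site conditional law $\{\pi(\cdot_v^s)/\pi(\cdot_v^\bullet)\}_s$ and then chain a pointwise comparison with a stochastic-dominance step, merely in the opposite order. The hard-constraint bookkeeping you flag is handled in the paper by extending $\mu/\pi$ once and for all to an increasing function $f$ on $S^V$ via $f(\sigma)=\max\{\mu(\omega)/\pi(\omega):\omega\in\Omega,\,\omega\le\sigma\}$ (with $f=0$ when the set is empty), which is precisely the monotone extension you sketch in your final paragraph and makes both inequalities in the chain immediate without case analysis.
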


\begin{proof}
Define $f : S^V \to \R$ by
\begin{equation} \label{deff}
f(\sigma):= \max \Bigl\{\frac{\mu(\omega)}{\pi(\omega)} \, :
  \, \omega \in \Omega, \, \, \omega \le \sigma \Bigr\} \,
\end{equation}
with the convention that $f(\sigma)=0$ if there is no
$\omega \in \Omega$ satisfying $\omega \le \sigma$.
Then $f$ is increasing on $S^V$, and $f$ agrees with $\mu/\pi$ on $\Omega$.

Let $\sigma < \tau$ be two configurations in $\Omega$; we wish to
show that
\begin{equation}
\frac{\mu_v}{\pi}(\sigma) \le \frac{\mu_v}{\pi}(\tau).
\end{equation}

Note first that for any $s \in S$,
$$
f(\sigma_v^s) \le f(\tau_v^s) \,,
$$
since $f$ is increasing.  Furthermore,
$f(\tau_v^s)$ is an increasing function of $s$.
Thus, by (\ref{update}),
\begin{eqnarray} \nonumber
\frac{\mu_v}{\pi}(\sigma) &=&
 \frac{\mu(\sigma_v^\bullet)}{\pi(\sigma_v^\bullet)}
= \sum_{s \in S} f(\sigma_v^s)
\frac{\pi(\sigma_v^s)}{\pi(\sigma_v^\bullet)} \\ [1ex] \nonumber
&\le&  \sum_{s \in S} f(\tau_v^s)
\frac{\pi(\sigma_v^s)}{\pi(\sigma_v^\bullet)} \le
\sum_{s \in S} f(\tau_v^s)
\frac{\pi(\tau_v^s)}{\pi(\tau_v^\bullet)} = \frac{\mu_v}{\pi}(\tau) \,,
\end{eqnarray}
where the last inequality follows from the stochastic domination
guaranteed by monotonicity of the system.
\end{proof}

\begin{lemma}\label{Lemma3}
Suppose that $S$ is totally ordered.  If $\alpha$ and $\beta$
are probability distributions on $S$ such that $\alpha/\beta$ is
increasing on $S$ and $\beta(s)>0$ for all $s \in S$, then
$\alpha \succeq \beta$.
\end{lemma}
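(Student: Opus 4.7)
The plan is to prove this classical fact (monotone likelihood ratio implies stochastic domination on a chain) by the standard threshold argument. Set $h(s) := \alpha(s)/\beta(s)$, which is well-defined and increasing on $S$ by hypothesis, and rewrite the claim as
$$
\sum_{s \in S} g(s)\bigl(\alpha(s) - \beta(s)\bigr) = \sum_{s \in S} g(s)\bigl(h(s)-1\bigr)\beta(s) \ge 0
$$
for every increasing $g : S \to \R$.

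The first step is to locate a threshold. Since $\alpha$ and $\beta$ are both probability measures, $\sum_s (h(s)-1)\beta(s) = 0$, so $h-1$ cannot be strictly positive or strictly negative on all of $S$. Combined with the fact that $h$ is increasing, this yields some $s^\ast \in S$ such that $h(s) \le 1$ for $s < s^\ast$ and $h(s) \ge 1$ for $s \ge s^\ast$.

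The second step is the sign analysis. Because $\sum_s (h(s)-1)\beta(s) = 0$, I can subtract $g(s^\ast)\sum_s(h(s)-1)\beta(s) = 0$ from the expression above to obtain
$$
\sum_{s \in S} \bigl(g(s)-g(s^\ast)\bigr)\bigl(h(s)-1\bigr)\beta(s).
$$
Each summand is a product of $\beta(s) \ge 0$ with two factors of the same sign: for $s < s^\ast$ both $g(s)-g(s^\ast)$ and $h(s)-1$ are $\le 0$ (using that $g$ is increasing), while for $s \ge s^\ast$ both are $\ge 0$. Hence every term is nonnegative and the sum is $\ge 0$, proving $\int g\,d\alpha \ge \int g\,d\beta$ for all increasing $g$, which is exactly $\alpha \succeq \beta$.

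There is no real obstacle here; the only thing to watch is the (trivial) existence of the threshold $s^\ast$, which uses both the monotonicity of $h$ and the fact that $\alpha, \beta$ have the same total mass. The hypothesis $\beta(s) > 0$ is used implicitly to ensure that $h$ is defined on all of $S$ so that monotonicity of $h$ is a meaningful statement.
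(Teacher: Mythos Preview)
Your proof is correct. The threshold argument is sound: since $S$ is finite in this paper, the element $s^\ast = \min\{s : h(s) \ge 1\}$ exists, and your sign analysis goes through termwise.

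The paper's own proof takes a slightly different route. It writes
\[
\sum_s g(s)\alpha(s) = \sum_s g(s)\,\frac{\alpha(s)}{\beta(s)}\,\beta(s)
\;\ge\; \Bigl(\sum_s g(s)\beta(s)\Bigr)\Bigl(\sum_s \frac{\alpha(s)}{\beta(s)}\beta(s)\Bigr)
= \sum_s g(s)\beta(s),
\]
invoking the positive-correlations (Chebyshev) inequality for the two increasing functions $g$ and $\alpha/\beta$ under the measure $\beta$ on a totally ordered set. Your threshold argument is in fact one standard way to \emph{prove} that Chebyshev inequality, so the two approaches are closely related; the difference is that the paper cites the correlation inequality as a black box, whereas your argument is self-contained and slightly more elementary. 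The trade-off is concision versus transparency: the paper's version is a single line once Chebyshev is granted, while yours makes explicit where the sign structure comes from.
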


\begin{proof}
Let $g$ be any increasing function on $S$; then, with all sums taken
over  $s \in S$,
$$
\sum g(s)\alpha(s) = \sum g(s) \frac{\alpha(s)}{\beta(s)}\beta(s)
\ge \sum g(s)\beta(s) \cdot \sum \frac{\alpha(s)}{\beta(s)}\beta(s)
= \sum g(s)\beta(s),
$$
confirming stochastic domination.  The inequality in the chain is
the positive correlations property of totally ordered sets
(which goes back to Chebyshev, see \cite{Li} \S II.2),
 applied to the increasing functions $g$ and $\alpha/\beta$
on $S$ with measure $\beta$.
\end{proof}

\begin{lemma}\label{Lemma4}
Let $\langle \Omega,S,V,\pi \rangle$ be a monotone system.
If $\mu$ is a distribution on $\Omega$ such that $\mu/\pi$ is increasing,
then $\mu \succeq \mu_v$
for any $v \in V$.
\end{lemma}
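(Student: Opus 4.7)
The plan is to prove $\mu \succeq \mu_v$ by slicing $\Omega$ into the equivalence classes $\sigma_v^\bullet$ (configurations agreeing off of $v$) and comparing the conditional distributions of $\mu$ and $\mu_v$ on each class, where Lemma~\ref{Lemma3} applies.

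First I would observe that $\mu$ and $\mu_v$ have identical marginals on the partition of $\Omega$ into classes $\sigma_v^\bullet$: by definition (\ref{update}), $\mu_v(\sigma_v^\bullet) = \mu(\sigma_v^\bullet)$ for every class. Hence for any increasing function $g : \Omega \to \R$,
\begin{equation*}
\int g\,d\mu - \int g\,d\mu_v = \sum_{\sigma_v^\bullet} \mu(\sigma_v^\bullet) \sum_{s \in S : \sigma_v^s \in \Omega}\! g(\sigma_v^s)\bigl[ \mu(\sigma_v^s \mid \sigma_v^\bullet) - \mu_v(\sigma_v^s \mid \sigma_v^\bullet)\bigr],
\end{equation*}
where the outer sum runs over classes with $\mu(\sigma_v^\bullet)>0$. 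On each class the restriction $s \mapsto g(\sigma_v^s)$ is increasing on the (totally ordered) chain $\{s \in S : \sigma_v^s \in \Omega\}$, so it suffices to show that within each class, the conditional $\mu(\,\cdot\mid\sigma_v^\bullet)$ stochastically dominates $\mu_v(\,\cdot\mid\sigma_v^\bullet)$.

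Next I would compute these two conditionals explicitly. Using (\ref{update}), $\mu_v(\sigma_v^s\mid\sigma_v^\bullet) = \pi(\sigma_v^s)/\pi(\sigma_v^\bullet)$, which is just $\pi$ conditioned on the class. On the other hand, $\mu(\sigma_v^s\mid\sigma_v^\bullet) = \mu(\sigma_v^s)/\mu(\sigma_v^\bullet)$. Their ratio on the chain is
\begin{equation*}
\frac{\mu(\sigma_v^s\mid\sigma_v^\bullet)}{\mu_v(\sigma_v^s\mid\sigma_v^\bullet)} = \frac{\pi(\sigma_v^\bullet)}{\mu(\sigma_v^\bullet)}\cdot\frac{\mu(\sigma_v^s)}{\pi(\sigma_v^s)}.
\end{equation*}
The first factor is a positive constant (independent of $s$), and the second factor is $(\mu/\pi)(\sigma_v^s)$, which is increasing in $s$ by hypothesis. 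Since $\pi>0$ on $\Omega$, the conditional $\mu_v(\,\cdot\mid\sigma_v^\bullet)$ is strictly positive on the chain, so Lemma~\ref{Lemma3} yields $\mu(\,\cdot\mid\sigma_v^\bullet) \succeq \mu_v(\,\cdot\mid\sigma_v^\bullet)$.

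Combining these facts, the inner bracket in the displayed identity, integrated against the increasing function $s\mapsto g(\sigma_v^s)$, is nonnegative on every class, so $\int g\,d\mu \ge \int g\,d\mu_v$. As $g$ was an arbitrary increasing function on $\Omega$, this gives $\mu \succeq \mu_v$. I do not anticipate a real obstacle: the argument is a straightforward reduction to Lemma~\ref{Lemma3} via the natural fibration of $\Omega$ over $V\setminus\{v\}$, and the only bookkeeping point to be careful about is that the chain $\{s: \sigma_v^s\in\Omega\}$ may be a proper subset of $S$, which is harmless since the increasing-ratio hypothesis and the positivity of $\pi$ still hold on this subset.
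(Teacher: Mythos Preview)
Your proof is correct and follows essentially the same approach as the paper's: slice $\Omega$ into the fibers $\sigma_v^\bullet$, observe that on each fiber the ratio $\mu/\mu_v$ equals a positive constant times $\mu/\pi$ and is therefore increasing, apply Lemma~\ref{Lemma3} on each fiber, and sum. Your write-up is somewhat more explicit than the paper's (you spell out why the marginals on the partition agree and compute the conditional ratio), but the argument is the same.
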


\begin{proof}
Let $g$ be increasing.  If $\sigma \in \Omega$ satisfies
$\mu(\sigma_v^\bullet)>0$, then
$\mu/\mu_v$ is increasing on $\sigma_v^\bullet$.
By Lemma~\ref{Lemma3}
(applied to $\{ s \in S: \, \sigma^s_v \in \Omega \}$ in place of $S$),
for such $\sigma$ we have
$$
\sum_{s \in S} g(\sigma^s_v) \frac{\mu(\sigma^s_v)}{\mu(\sigma_v^\bullet)}
\ge \sum_{s \in S} g(\sigma^s_v)
\frac{\mu_v(\sigma^s_v)}{\mu(\sigma_v^\bullet)}~.
$$
Multiplying by $\mu(\sigma_v^\bullet)$ and
summing over all choices of $\sigma_v^\bullet$
gives
$$
\sum_{\sigma \in \Omega} g(\sigma)\mu(\sigma)
\ge \sum_{\sigma \in \Omega} g(\sigma)\mu_v(\sigma) \,,
$$
establishing the required stochastic dominance.
\end{proof}

\begin{lemma}\label{Lemma-var}
Let $\langle \Omega,S,V,\pi \rangle$ be a monotone system, and
let $\mu,\nu$ be
two arbitrary
distributions on $\Omega$.
If $\nu/\pi$ is increasing on $\Omega$
and $\nu \preceq \mu$, then $\|\nu-\pi\| \le   \|\mu-\pi\|$.
\end{lemma}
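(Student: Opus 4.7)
The plan is to use the standard formula that expresses total variation distance as a supremum over events, and to exploit the hypothesis that $\nu/\pi$ is increasing to identify an explicit maximizing event that happens to be an up-set.

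First I would let $A = \{\sigma \in \Omega : \nu(\sigma) \ge \pi(\sigma)\}$. Because $\nu/\pi$ is increasing on $\Omega$, the set $A$ is automatically an up-set: if $\sigma \in A$ and $\sigma \le \tau$ with $\tau \in \Omega$, then $\nu(\tau)/\pi(\tau) \ge \nu(\sigma)/\pi(\sigma) \ge 1$, so $\tau \in A$. The standard identity for total variation distance then gives
\begin{equation*}
\|\nu - \pi\| = \nu(A) - \pi(A).
\end{equation*}

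Next I would use stochastic dominance. Since $\mathbf{1}_A$ is an increasing function on $\Omega$ and $\nu \preceq \mu$, we have $\mu(A) \ge \nu(A)$, hence
\begin{equation*}
\mu(A) - \pi(A) \ge \nu(A) - \pi(A) = \|\nu - \pi\|.
\end{equation*}
Finally, the variational characterization of total variation distance yields $\|\mu - \pi\| \ge \mu(A) - \pi(A)$ (evaluated at the single test set $A$), and combining the two inequalities produces $\|\mu - \pi\| \ge \|\nu - \pi\|$.

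I do not expect any real obstacle: the whole argument rests on the one structural observation that an increasing likelihood ratio forces the ``overweight'' set of $\nu$ relative to $\pi$ to be an up-set, after which stochastic dominance on the indicator of that up-set is all that is needed. The only minor subtlety is the choice of strict vs.\ non-strict inequality in the definition of $A$; either works since the boundary $\{\nu = \pi\}$ contributes zero to $\nu(A) - \pi(A)$, so the value $\nu(A) - \pi(A)$ equals the usual total variation $\|\nu-\pi\|$ in either case.
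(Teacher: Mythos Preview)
Your proof is correct and follows essentially the same approach as the paper: define the set $A$ where $\nu$ exceeds $\pi$, observe it is an up-set because $\nu/\pi$ is increasing, and then apply stochastic dominance to the indicator $\mathbf{1}_A$. The paper uses the strict inequality $\nu(\sigma)>\pi(\sigma)$ in defining $A$, but as you note this is immaterial.
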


\begin{proof}
Let $A=\{\sigma: \, \nu(\sigma)>\pi(\sigma)\}$.
Then the indicator of $A$ is increasing, so
$$
\|\nu-\pi\| = \sum_{\sigma \in A} (\nu(\sigma)-\pi(\sigma)) = \nu(A)-\pi(A)
\le \mu(A)-\pi(A) \,,
$$
since  $\nu \preceq \mu$. The right-hand side is at most $\|\mu-\pi\|$.
\end{proof}

\begin{theorem} \label{onestep}
Let $\langle \Omega,S,V,\pi \rangle$ be a monotone system,
Let $\mu$
be the distribution on $\Omega$ which results from successive updates
at sites $u_1,\dots,u_k$, beginning at the top configuration.
Let $\nu$ be defined similarly but with the update at $u_j$ left out.
Then
\begin{enumerate}
\item $\mu \preceq \nu$, and
\item $\|\mu - \pi\| \leq \|\nu - \pi\|$.
\end{enumerate}
\end{theorem}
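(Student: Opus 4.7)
The plan is to split the update sequence at index $j$ and then propagate the effect of removing that single update through the remaining updates via stochastic domination. Let $\rho$ denote the distribution obtained from the top configuration by updating at $u_1,\dots,u_{j-1}$; then $\mu$ results from $\rho$ by successive updates at $u_j,u_{j+1},\dots,u_k$, while $\nu$ results from $\rho$ by successive updates at $u_{j+1},\dots,u_k$. The point mass at the top configuration clearly has increasing likelihood ratio with respect to $\pi$: the ratio vanishes except at the unique maximal element of $\Omega$. Hence iterated application of Lemma~\ref{Lemma1} shows that both $\rho/\pi$ and $\mu/\pi$ are increasing on $\Omega$.

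Applying Lemma~\ref{Lemma4} to $\rho$ gives the single-step domination $\rho_{u_j}\preceq\rho$. To obtain $\mu\preceq\nu$ we must show that this inequality is preserved when both sides are further updated in unison at $u_{j+1},\dots,u_k$. The key auxiliary claim is: in any monotone system, if $\alpha\preceq\beta$ then $\alpha_v\preceq\beta_v$ for every $v\in V$. This follows from a monotone coupling argument. Start from a coupling $(\sigma,\tau)$ of $\alpha$ and $\beta$ with $\sigma\le\tau$ almost surely; conditionally on $(\sigma,\tau)$, re-sample the spin at $v$ in both coordinates using a monotone coupling of the conditional distributions $\{\pi(\sigma_v^s)/\pi(\sigma_v^\bullet)\}_{s\in S}$ and $\{\pi(\tau_v^s)/\pi(\tau_v^\bullet)\}_{s\in S}$. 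Such a coupling exists precisely because the monotonicity hypothesis (\ref{mono}) asserts that these two spin distributions are stochastically ordered. The resulting pair remains coordinate-wise ordered, and its marginals are $\alpha_v$ and $\beta_v$.

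Applying this claim $k-j$ times with $v=u_{j+1},\dots,u_k$ propagates $\rho_{u_j}\preceq\rho$ through all the remaining updates, yielding $\mu\preceq\nu$ and proving part~1. For part~2, invoke Lemma~\ref{Lemma-var} with the roles of its $\mu$ and $\nu$ interchanged: since $\mu/\pi$ is increasing and $\mu\preceq\nu$, that lemma delivers $\|\mu-\pi\|\le\|\nu-\pi\|$.

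The only step not handed to us by the preceding lemmas is the auxiliary ``updates preserve stochastic domination'' claim, and I expect the monotone coupling construction above to be the main (and essentially only) nontrivial point of the argument; everything else is bookkeeping around Lemmas~\ref{Lemma1}, \ref{Lemma4}, and \ref{Lemma-var}.
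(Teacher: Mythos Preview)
Your argument is correct and follows essentially the same route as the paper's own proof: both set up the intermediate distribution after $j{-}1$ updates, invoke Lemma~\ref{Lemma1} inductively to keep the likelihood ratios increasing, use Lemma~\ref{Lemma4} to get the one-step drop $\rho_{u_j}\preceq\rho$, then push this domination through the remaining updates and finish with Lemma~\ref{Lemma-var}. The only visible difference is that the paper asserts ``stochastic dominance persists under updates'' without further comment, whereas you spell out the standard monotone-coupling justification; your identification that it is $\mu/\pi$ (the smaller measure's ratio) that must be increasing for Lemma~\ref{Lemma-var} is also the cleaner way to state the final step.
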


\begin{proof}
Let $\mu^0$ be the distribution concentrated at the
top configuration, and $\mu^{i} = (\mu^{i-1})_{u_{i}}$ for $i \ge 1$.
Applying Lemma~\ref{Lemma1} inductively, we have that
each $\mu^i/\pi$ is increasing, for $0 \le i \le k$.
In particular, we see from Lemma~\ref{Lemma4} that $\mu^{j-1}
\succeq (\mu_{j-1})_{u_j} = \mu_j$.

If we define $\nu^i$ in the same manner as $\mu_i$, except
that $\nu^j=\nu^{j-1}$, then because stochastic dominance
persists under updates, we have $\nu^i \succeq \mu^i$ for
all $i$; when $i=k$, we get $\mu \preceq \nu$ as desired.

For the second statement of the theorem, we merely apply
Lemma~\ref{Lemma-var}, noting that $\nu^k/\pi$ is increasing
by the same inductive argument used for $\mu$.
\end{proof}

\begin{proof}[\bf Proof of Theorem \ref{subseq}]
Apply Theorem \ref{onestep} inductively, censoring one site at a time.
This establishes the case where the update locations are deterministic.
In the case where the update sequence $v_1(\xi) \ldots, v_m(\xi)$ that yields $\mu$
is random (defined on some probability space $(\Xi, \P_\Xi)$) and its subsequence leading to $\nu$ is also random
(defined on the same probability space), then conditioning on $\xi$ yields measures
$\mu(\xi)$ and $\nu(\xi)$ such that $\mu(\xi) \preceq \nu(\xi)$ and $\nu(\xi)/\pi$ is increasing on $\Omega$.
 These properties are preserved under averaging over $\Xi$, so we conclude that $\mu \preceq \nu$ and $\nu/\pi$
 is increasing on $\Omega$. The inequality between total variation norms follows from Lemma~\ref{Lemma-var}.

\end{proof}


\section{Comparison of single site update schemes}

In practice, updates on a system $\langle \Omega,S,V,\pi \rangle$ are often performed
systematically rather than at random.  Typically a permutation of $V$ is fixed
and sites are updated periodically in permutation order.  If the interaction graph is
bipartite, it is possible and often convenient to update all odd sites simultaneously,
then all even sites, and repeat; we call this {\em alternating} updates.  To be fair,
we count a full round of alternating updates as $n$ single updates, so that alternating
updates constitute a special case of systematic updating.

Mixing time may differ from one update scheme to another; for example, if there
are no interactions (so that one update per site produces perfect mixing) then
systematic updating is faster by a factor of $\frac{1}{2}\log n$ than uniformly random updates,
since  after $(\frac{1}{2}-\ep)n \log n$ random updates about $n^{1/2+\ep}$ sites have not been hit,
so counting the number of sites that still have the initial spin implies the total variation distance to equilibrium is still close to 1.
(For a more general $\Omega(n\log n)$ lower bound for Glauber dynamics with random updates see \cite{HS}).

Embarrassingly, there are only a few results to support the observation that mixing times
for the various update schemes never seem to differ by more than a factor of $\log n$
and rarely by more than a constant.  (See \cite{DGJ1, DGJ2} for some recent progress
in the Dobrushin uniqueness regime.)
Theorem~\ref{subseq} allows us to obtain some useful
comparison results for monotone systems, but is still well short of what is suspected to be true.

\begin{theorem}\label{alt->sys}
Let ${\mathcal A}$ be the alternating update scheme, and ${\mathcal S}$ an arbitrary
systematic update scheme, for a bipartite monotone system $\langle \Omega,S,V,\pi \rangle$.
Then the mixing time for ${\mathcal S}$ (starting at the top state) is no more than twice
the mixing time for ${\mathcal A}$.
\end{theorem}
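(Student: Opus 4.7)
Let $T_{\mathcal A}$ denote the mixing time for $\mathcal A$ at threshold $\eps$. The plan is to apply Theorem~\ref{subseq} after identifying, within the first $2T_{\mathcal A}$ steps of $\mathcal S$, a subsequence of $T_{\mathcal A}$ updates whose resulting distribution coincides with that of $\mathcal A$ after $T_{\mathcal A}$ steps. Theorem~\ref{subseq} will then give that $\mathcal S$ after $2T_{\mathcal A}$ steps is at least as close to $\pi$ in total variation as $\mathcal A$ after $T_{\mathcal A}$ steps, which by hypothesis is within $\eps$, proving that the mixing time of $\mathcal S$ is at most $2T_{\mathcal A}$.

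To construct the subsequence, let $V = O \sqcup E$ be the bipartition and let $O_{\mathcal S}$ and $E_{\mathcal S}$ denote the orderings induced on $O$ and $E$ by the permutation underlying $\mathcal S$. For simplicity assume $T_{\mathcal A} = kn$ is a whole number of rounds; then $2T_{\mathcal A}$ steps of $\mathcal S$ consist of $2k$ complete sweeps. From each consecutive pair of sweeps I would extract: first, the updates at sites of $O$ from sweep $2i{-}1$ (in the order $O_{\mathcal S}$); then the updates at sites of $E$ from sweep $2i$ (in the order $E_{\mathcal S}$). Concatenating over $i=1,\dots,k$ yields a subsequence of the original $2T_{\mathcal A}$ updates, of length $kn = T_{\mathcal A}$, with pattern $(O_{\mathcal S}, E_{\mathcal S}, O_{\mathcal S}, E_{\mathcal S}, \dots)$. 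This is a legitimate alternating sequence, albeit with idiosyncratic orderings within each half-round.

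The next step is to argue that these half-round orderings do not matter, so that the extracted subsequence produces exactly the canonical $k$-round alternating distribution. This is the crux of the proof: because $O$ is an independent set in the bipartite Markov random field, an update at $u \in O$ changes only the spin at $u$, and by the Markov property its conditional distribution depends only on the spins at the neighbors of $u$, all of which lie in $E$; hence any two $O$-updates commute, and likewise for $E$-updates. Consequently each extracted half-round produces the same distribution as the canonical parallel half-round, so the subsequence output is the $k$-round alternating distribution, and Theorem~\ref{subseq} closes the argument.

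I expect the main obstacle to be precisely this commutation step, which implicitly invokes the Markov random field property in the bipartite setting; without it one would have to establish commutation directly from the update formula~\eqref{update} using the factorization structure of $\pi$. A minor bookkeeping point is the case where $T_{\mathcal A}$ is not a multiple of $n$, which I would handle by rounding $T_{\mathcal A}$ up to the next full round at an additive $O(n)$ cost absorbed into the factor of two.
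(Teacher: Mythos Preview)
Your proof is correct and is essentially the paper's argument: from two consecutive $\mathcal S$-sweeps, censor down to the odd-site updates of the first and the even-site updates of the second, invoke commutation of same-parity updates (which the paper asserts in one clause, while you correctly trace it to the bipartite Markov property), and apply Theorem~\ref{subseq}. The paper's proof is three sentences and does not discuss the $T_{\mathcal A}$-not-a-multiple-of-$n$ issue; your handling of that point is fine, and in any case $\mathcal A$ proceeds in half-rounds so $T_{\mathcal A}$ is naturally a multiple of $n/2$.
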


\begin{proof}
When updating according to ${\mathcal S}$, we censor all even-site updates; on even passes,
all odd-site updates.  Since successive updates of sites of the same parity commute, the
result is exactly ${\mathcal A}$ and an application of Theorem~\ref{subseq} shows that we
mix at a cost of at most a factor of 2.
\end{proof}

\begin{theorem}\label{alt->rand}
Let ${\mathcal A}$ be the alternating update scheme, and ${\mathcal R}$ the uniformly random
update scheme, for a bipartite monotone system $\langle \Omega,S,V,\pi \rangle$. Then the
mixing time for ${\mathcal R}$ (starting at the top state) is no more than $2 \log n$ times
the mixing time for ${\mathcal A}$.
\end{theorem}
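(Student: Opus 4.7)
The plan is to run $\mathcal R$ for approximately $2 M \log n$ single-site updates, where $M := T_{\mathcal A}(\epsilon)$ is the $\mathcal A$ mixing time, and then use the random-sequence form of Theorem~\ref{subseq} to identify a censored subsequence distributed exactly as $\mathcal A$ after $M$ steps. Write $M = Rn$ for simplicity, so $R$ full alternating rounds bring $\mathcal A$ within $\epsilon$ of $\pi$. Choose $N := (2+\delta) M \log n$ for any fixed $\delta > 0$.

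Given the i.i.d.\ uniform updates $v_1, v_2, \ldots$ of $\mathcal R$, define stopping times $0 = T_0 < T_1 < T_2 < \cdots$ adaptively: $T_{2k-1}$ is the first $t > T_{2k-2}$ at which every odd-parity site appears in $\{v_{T_{2k-2}+1}, \ldots, v_t\}$, and $T_{2k}$ is the analogue for even-parity sites after $T_{2k-1}$. Let $K := \max\{k : T_{2k} \le N\}$. Form the censored subsequence by keeping, for each $k \le K$, the odd-site updates in $(T_{2k-2}, T_{2k-1}]$ and the even-site updates in $(T_{2k-1}, T_{2k}]$. Bipartiteness implies that within each such interval the retained updates commute (opposite-parity neighbors are frozen) and that repeated updates at a single site collapse in distribution to one update; hence the censored subsequence has exactly the law of $K$ complete alternating rounds of $\mathcal A$ started at the top.

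Theorem~\ref{subseq} (random-sequence version) then yields $\|\mu_N^{\mathcal R} - \pi\| \le \|\mu_{\mathrm{cens}} - \pi\|$. A second application of Theorem~\ref{subseq}, now inside $\mathcal A$ itself, censoring rounds past the $R$-th, shows that $j \mapsto \|\mu_{jn}^{\mathcal A} - \pi\|$ is non-increasing, hence at most $\epsilon$ once $j \ge R$. Convexity of total variation distance then gives
$$
\|\mu_N^{\mathcal R} - \pi\| \le \E \|\mu_{Kn}^{\mathcal A} - \pi\| \le \epsilon + \P[K < R] = \epsilon + \P[T_{2R} > N].
$$
Each $T_{2k} - T_{2(k-1)}$ is a sum of two independent coupon-collector times for $n/2$ designated coupons drawn uniformly from $n$ candidates, with mean $n H_{n/2} = n \log n + O(n)$ and variance $O(n^2)$; therefore $\E T_{2R} = 2 M \log n + O(M)$, and a Chebyshev or Chernoff estimate for the sum of $2R$ independent such times forces $\P[T_{2R} > N] \to 0$.

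The main obstacle is not the censoring step---which is essentially routine once bipartiteness is exploited---but pinning the multiplicative constant down to $2 \log n$: this rests on the sharp asymptotic $H_{n/2} = \log n + O(1)$ together with concentration of the sum of $2R$ geometric-based coupon-collector variables. A less careful bound would yield only an $O(\log n)$ prefactor with unspecified constant. One should also verify that the collapsing argument for same-parity updates produces the correct \emph{distribution} (not merely the same expected behavior) of an alternating half-round; this follows from the Markov random field property because in a bipartite graph all neighbors relevant to an odd-site update have even parity and are held fixed throughout the odd half-round, and symmetrically.
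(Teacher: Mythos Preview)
Your argument is essentially the same as the paper's: both run $\mathcal R$, define coupon-collector stopping times for each parity class, censor the wrong-parity updates within each phase, and appeal to Theorem~\ref{subseq} together with the commutativity of same-parity updates in a bipartite Markov random field. The paper's proof is two sentences and simply quotes the mean $2(n/2)\log(n/2)$ for each half-round; you supply the details it omits---idempotence of repeated single-site updates, the identification $\mu_{\mathrm{cens}}=\E_K\,\mu^{\mathcal A}_{Kn}$, monotonicity of $j\mapsto\|\mu^{\mathcal A}_{jn}-\pi\|$ via a second censoring, and the Chebyshev/Chernoff concentration needed to turn the expected coupon-collector time into an actual mixing-time bound. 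There is no substantive methodological difference.
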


\begin{proof}
When updating according to ${\mathcal R}$, we censor all even-site updates until all
odd sites are hit; then we censor all odd-site updates until all even sites are hit, and
repeat.  Since each of these steps takes $2(n/2)\log(n/2)$ updates on average,
Theorem~\ref{subseq} guarantees a loss of at most a factor of $2 \log n$.
\end{proof}

\begin{theorem}\label{rand->sys}
Let ${\mathcal R}$ be the uniformly random update scheme, and ${\mathcal S}$ an arbitrary
systematic update scheme, for a monotone system $\langle \Omega,S,V,\pi \rangle$ of maximum
degree $\Delta_{\rm max}$. Then the mixing time for ${\mathcal S}$ is no more than O$(\sqrt{\Delta_{\rm max} n})$ times
the mixing time for ${\mathcal R}$.
\end{theorem}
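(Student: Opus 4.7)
My plan is to apply the censoring inequality Theorem~\ref{subseq} by running ${\mathcal S}$ from the top configuration for $T = C\sqrt{\Delta_{\rm max}\, n}\cdot T_{{\mathcal R}}(\epsilon/2)$ steps and producing, on a common probability space, a random subsequence of these $T$ updates whose marginal law agrees (or can be usefully compared) with that of ${\mathcal R}$ after $T_{{\mathcal R}}(\epsilon/2)$ steps. Theorem~\ref{subseq} then gives $\|\mu_{{\mathcal S},T}-\pi\| \le \|\nu_{{\mathcal R},T_{\mathcal R}}-\pi\| + o(1) < \epsilon$, proving the claimed bound.

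As a baseline, partition the $T$ updates into $L = T/n$ consecutive systematic sweeps of length $n$ and keep exactly one uniformly random position per sweep: the kept sites are then i.i.d.\ uniform on $V$, so the retained subsequence has exactly the law of ${\mathcal R}$ run for $L$ steps, yielding only the weaker bound $T_{{\mathcal S}}(\epsilon) \le n\, T_{{\mathcal R}}(\epsilon)$. To reach the sharper factor $\sqrt{\Delta_{\rm max}\, n}$ I would keep $k = \lceil \sqrt{n/\Delta_{\rm max}}\,\rceil$ positions per sweep, chosen uniformly without replacement among the $n$ positions of the sweep. Matching the total retained budget to $T_{{\mathcal R}}(\epsilon)$ forces $L = T_{{\mathcal R}}(\epsilon)/k$, and hence $T = Ln = \sqrt{\Delta_{\rm max}\, n}\cdot T_{{\mathcal R}}(\epsilon)$, which is exactly the target. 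The calibration of $k$ is designed so that the within-sweep birthday-type collision probability $\Theta(k^2/n) = \Theta(1/\Delta_{\rm max})$ is of the right order.

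The main obstacle, and the heart of the proof, is the mismatch that this sharper subsampling introduces: within a single sweep the retained sites are forced to be distinct, while a true ${\mathcal R}$-block of length $k$ is i.i.d.\ and may collide. I intend to handle this by first generating a genuine ${\mathcal R}$ sequence of length $Lk$ and then greedily embedding each $k$-block into the corresponding sweep of ${\mathcal S}$ as an increasing sub-sequence (in ${\mathcal S}$-order), substituting a freshly drawn site only when an intra-block collision would otherwise force failure. Monotonicity enters here because Theorem~\ref{subseq} lets us absorb these occasional substitute updates as extras that can be censored away again, and the max-degree bound is used to certify that the total substitution mass across all $L$ sweeps is $o(1)$. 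The step I expect to require the most care is verifying that these per-sweep coupling errors do not compound into a significant total-variation loss over the $L$ sweeps when $T_{{\mathcal R}}(\epsilon)$ is polynomially large in $n$; this is precisely where the tension between the factor $\sqrt{\Delta_{\rm max}\, n}$ and the coupling error must be balanced by the choice of $k$.
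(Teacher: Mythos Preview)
Your proposal misses the central mechanism that makes the $\sqrt{\Delta_{\rm max}\,n}$ factor achievable: \emph{updates at non-adjacent sites commute}. When you retain $k$ sites from a systematic sweep, the censored dynamics performs those updates in the fixed $\mathcal S$-order. For this to coincide in law with a length-$k$ segment of $\mathcal R$, it is not enough that the $k$ retained sites be distinct; they must appear in the $\mathcal R$ sequence in precisely the $\mathcal S$-order. Your ``greedy embedding'' of an $\mathcal R$-block $w_1,\dots,w_k$ as an increasing subsequence of the sweep succeeds only when the sweep positions of $w_1,\dots,w_k$ happen to be increasing, which for distinct sites occurs with probability $1/k!$, not $1-O(k^2/n)$. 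Intra-block site collisions are a negligible side issue; the real obstruction is order, and your proposal does not address it. Relatedly, your role for $\Delta_{\rm max}$ (bounding a ``substitution mass'' by $O(1/\Delta_{\rm max})$ per sweep) does not lead anywhere: summed over $L=T_{\mathcal R}/k$ sweeps this is $T_{\mathcal R}/\sqrt{\Delta_{\rm max}\,n}$, which is not $o(1)$ for typical $T_{\mathcal R}$, as you yourself flag.

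The paper's argument sidesteps all of this. Before each sweep it draws uniform sites one by one and stops the first time two drawn sites are \emph{adjacent} (an adjacency birthday problem, with pair-adjacency probability at most $(\Delta_{\rm max}{+}1)/n$); the expected number of sites kept is therefore $\Theta(\sqrt{n/\Delta_{\rm max}})$. Because the retained sites are pairwise non-adjacent, their single-site updates commute, so performing them in $\mathcal S$-order yields exactly the same distribution as performing them in the random order in which they were drawn. Hence the censored $\mathcal S$ \emph{is} $\mathcal R$, with no coupling error to control, and Theorem~\ref{subseq} gives the factor $n/\sqrt{n/\Delta_{\rm max}}=\sqrt{\Delta_{\rm max}\,n}$ directly. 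This is where $\Delta_{\rm max}$ genuinely enters: it governs the adjacency-birthday threshold, not a site-collision rate.
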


\begin{proof}
Prior to implementing a round of ${\mathcal S}$, we choose uniformly random sites one by one
as long as no two are adjacent; since the probability of adjacency for a random pair of
sites is at most $(\Delta_{\rm max}+1)/n$, this ``birthday problem'' procedure will keep about
$\sqrt(n/\Delta_{\rm max})$ updates.  All updates of sites not on this list are censored from the
upcoming round of ${\mathcal S}$, incurring a loss of a factor of $n/(\sqrt(n/\Delta_{\rm max})) =
\sqrt{\Delta_{\rm max} n}$.  Since updates of non-adjacent sites commute, Theorem~\ref{subseq} applies.
\end{proof}

If $\langle \Omega,S,V,\pi \rangle$ is bipartite, then since the alternating scheme is a
systematic scheme, Theorem~\ref{rand->sys} applies to it as well.

From systematic updates to alternating or random updates, there seems to be nothing better
to do in our context than to score one update per systematic round, incurring a factor
of $n$ penalty.  

\subsection{Hanging subgraphs}

Let $H$ be a subgraph of the finite graph $G$, on which some system $\langle \Omega,S,V,\pi \rangle$
is defined, and suppose what is wanted is mixing on $H$.  When continuous-time Glauber dynamics
is employed, it is natural to compare mixing time $T_H$ on $H$ by itself (that is, with the rest of $G$
destroyed) with mixing time $T_G$  when all points of $G$ are being updated.  Indeed, for the
Ising model (with no external field), we conjecture that $T_H$ never exceeds $T_{G|H}$---echoing
a conjecture of the first author for spectral gaps, cited in \cite{Nacu} and
proved there when $G$ is a cycle.  Putting it another way, we think {\em bigger is slower}.

Because the Ising model is a Markov random field, and its stationary
distribution on a single site is independent of the graph, it enjoys the following property:
if only one vertex (say, $x$) of $H$ is adjacent to vertices of $G \setminus H$, then the stationary
distribution on $H$ is identical to the stationary distribution on $G$ restricted to $H$.
To see this, it suffices to note that either stationary distribution can be obtained
by flipping a coin to determine the sign of $x$, then conditioning the rest of the configuration
on the result.

We can now make use of Theorem~\ref{subseq}, together with monotonicity of the Ising model, to
prove our conjecture in this limited case.

\begin{theorem}\label{hanging} Let $H$ be a subgraph of the finite graph $G$ and suppose that at most one
vertex of $H$ is adjacent to vertices of $G \setminus H$.  Begin in the all ``$+$'' state and
fix a mixing tolerance $\ep$ for continuous Glauber dynamics.  Then $T_H(\ep)
\le T_{G|H}(\ep)$.
\end{theorem}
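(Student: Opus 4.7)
My plan is to apply the censoring inequality of Theorem~\ref{subseq} after augmenting Glauber-on-$G$ by extra block updates, designed so that the augmented chain's $H$-marginal coincides with Glauber-on-$H$. The enabling identity, highlighted just before the theorem, is $\pi_G|_H = \pi_H$: since $x$ is the sole vertex of $H$ adjacent to $G \setminus H$ and the Ising model has no external field, removing $x$ decouples $H \setminus \{x\}$ from $G \setminus H$ and the spin marginal at $x$ is uniform under both measures.

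Set $B = G \setminus H$ and $B' = B \cup \{x\}$; by the single-boundary hypothesis the exterior boundary $\partial B'$ consists only of the $H$-neighbors of $x$. Define the augmented continuous-time dynamics as ordinary Glauber-on-$G$, with one block update $U_{B'}$ inserted immediately before each single-site update at $x$; let $\hat\mu_t$ be its law at time $t$ started from all $+$ and $\mu_t^G$ the corresponding law for plain Glauber-on-$G$. First I verify $\hat\mu_t|_H = \tau_t^H$, the law of Glauber-on-$H$ at time $t$ from the top. Updates at $v \in H \setminus \{x\}$ depend only on $H$-neighbors and coincide with Glauber-on-$H$. At an $x$-clock ring, the inserted block update resamples $(\sigma_x, \sigma|_B)$ from $\pi_G(\cdot\mid \alpha|_{\partial B'})$, which by the Markov property equals $\pi_G(\cdot\mid \alpha|_{H\setminus\{x\}})$; the subsequent single-site $x$-update is reversible for $\pi_G$ and so leaves this conditional invariant, making the new $\sigma_x$-marginal $\pi_G(\cdot\mid \alpha|_{H\setminus\{x\}}) = \pi_H(\cdot\mid \alpha|_{H\setminus\{x\}})$ by $\pi_G|_H = \pi_H$. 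That is exactly the Glauber-on-$H$ transition at $x$, and because the block resample is independent of the past given $\alpha|_H$ the $H$-marginal is genuinely Markovian with the right rule.

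Now Theorem~\ref{subseq}, extended to permit block updates in the sequence (discussed below), applies: the augmented update sequence contains the Glauber-on-$G$ sequence as a subsequence, so starting from the top, $\hat\mu_t \preceq \mu_t^G$ on $\Omega_G$. Stochastic dominance is preserved under projection, giving $\tau_t^H \preceq \mu_t^G|_H$ on $\Omega_H$. Applying Lemma~\ref{Lemma1} inductively to Glauber-on-$H$ from its top configuration, $\tau_t^H / \pi_H$ is increasing on $\Omega_H$. Invoking Lemma~\ref{Lemma-var},
\[
\|\tau_t^H - \pi_H\| \le \|\mu_t^G|_H - \pi_H\| = \|\mu_t^G|_H - \pi_G|_H\|
\]
for every $t \ge 0$, which is exactly $T_H(\eps) \le T_{G|H}(\eps)$.

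The main obstacle is the block-update extension of Theorem~\ref{subseq}. Lemmas~\ref{Lemma3}, \ref{Lemma4}, \ref{Lemma-var} and the induction underlying Theorem~\ref{onestep} transfer unchanged; what needs checking is a block analogue of Lemma~\ref{Lemma1}: if $\mu/\pi$ is increasing on $\Omega$, so is the ratio after a block update $U_B$. This ratio depends on $\sigma$ only through $\sigma|_{B^c}$ and equals $\E_{\omega \sim \pi(\cdot\mid\sigma_B^\bullet)}[\mu(\omega)/\pi(\omega)]$; as $\sigma|_{B^c}$ increases, the integrand is pointwise nondecreasing (by the increasing hypothesis on $\mu/\pi$) while the conditional $\pi(\cdot\mid\sigma_B^\bullet)$ grows stochastically (by monotonicity of the system), so the expectation is nondecreasing.
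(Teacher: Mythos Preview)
Your approach is the same as the paper's at its core: both identify the $H$-marginal of a suitably augmented $G$-dynamics (using block updates of $B'=\{x\}\cup(G\setminus H)$) with Glauber-on-$H$, then invoke censoring to compare with plain Glauber-on-$G$. The difference lies in how censoring is applied. The paper avoids extending Theorem~\ref{subseq} to block updates: it replaces each $x$-update by a $B'$-block update, then approximates each block update by a long run of single-site updates inside $B'$, chosen so that the resulting sequence $Q''$ contains the Glauber-on-$G$ sequence $Q^+$, and closes with a contradiction argument. You instead \emph{insert} $U_{B'}$ before each $x$-update (so the augmented sequence manifestly contains the $G$-sequence as a subsequence) and extend censoring to block updates directly, arguing forward rather than by contradiction. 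Your route is cleaner and sidesteps the $\delta/2$ approximation.

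There is one imprecision in your last paragraph: Lemma~\ref{Lemma4} does \emph{not} transfer unchanged to block updates. Its proof invokes Lemma~\ref{Lemma3} on the totally ordered fiber $\sigma_v^\bullet$, whereas a block fiber $\sigma_B^\bullet$ is only partially ordered and Chebyshev's sum inequality no longer applies. The block conclusion $\mu\succeq U_B\mu$ is nonetheless true in your setting: either replace Lemma~\ref{Lemma3} by FKG positive association of $\pi$ restricted to $\sigma_B^\bullet$ (valid for the Ising model), or observe that $U_B$ is the limit of iterated single-site updates within $B$, each of which, by Lemmas~\ref{Lemma1} and~\ref{Lemma4}, preserves the increasing-ratio property and pushes the distribution stochastically downward. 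With that fix your argument is complete.
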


\begin{proof}
The result is of course trivial if $H$ is disconnected from $G \setminus H$; otherwise
let $x$ be the unique vertex of $H$ with neighbors outside $H$. Let $Q = \langle v_1,\dots,v_k \rangle$
be the target sites of a sequence of updates on $H$, and let $Q'$ on $G$ be the result of replacing
each update of $x$ in $Q$ by a block update of $\{x\} \cup (G \setminus H)$.  Then, on account
of the property noted above, the effects of $Q$ and $Q'$ are identical on $H$.

If it were not the case that $T_H(\ep) \le T_{G|H}(\ep)$, then there would in particular be
an update sequence $Q$ for $H$ and a supersequence $Q^+$ for $G$, all added sites being outside $H$,
such that $Q^+$ gets $H$ closer by some $\delta>0$ to stationarity than does $Q$.  However, for large
enough $j$, we can replace the block updates in $Q'$ by $j$ single-site updates within $\{x\} \cup
(G \setminus H)$ to get a new update sequence $Q''$ which contains $Q^+$, but whose resulting
distribution matches that of $Q'$ (thus also $Q$) to within total variation $\delta/2$.  This would
force $Q''$ to mix better than $Q^+$, contradicting Theorem~\ref{subseq}.
\end{proof}

\noindent{\bf Acknowledgements} we are grateful to Dana Randall for telling us about the censoring idea of
Van den Berg and Brouwer and for suggesting the question addressed in this paper.
We also thank the participants of the 2001 IES in Sweden, especially Svante Janson and Russell Lyons,
for helpful discussions during the early stages of this work.

\end{document}